\newcommand{\grG}{\gr^G}
\newcommand{\qgrG}{\qgr^G}
\title{On graded stable derived categories of isolated Gorenstein
quotient singularities}
\author{Kazushi Ueda}
\date{}
\begin{document}

\maketitle

\begin{abstract}
We show the existence
of a full exceptional collection
in the graded stable derived category
of a Gorenstein isolated quotient singularity
using a result of Orlov \cite{Orlov_DCCSTCS}.
We also show that
the equivariant graded stable derived category
of a Gorenstein Veronese subring of a polynomial ring
with respect to an action of a finite group
has a full strong exceptional collection,
even if the corresponding quotient singularity
is neither isolated nor Gorenstein.
\end{abstract}

\section{Introduction}

Let $A = \bigoplus_{d=0}^\infty A_d$ be an $\bN$-graded
Noetherian ring over a field $k$.
The ring $A$ is said to be {\em connected} if $A_0 = k$.
A connected ring $A$ is {\em Gorenstein} with parameter $a$
if $A$ has finite injective dimension as a right module over itself and
$$
 \RHom_A(k, A) = k(a)[-n].
$$
Here $\bullet(a)$ denotes the shift of $\bZ$-grading and
$\bullet[-n]$ is the shift in the derived category.
The {\em graded stable derived category} is
the quotient category
\begin{align} \label{eq:dbsing}
 \Dbsing(\gr A) = D^b(\gr A) / D^\perf(\gr A)
\end{align}
of the bounded derived category $D^b(\gr A)$
of finitely-generated  $\bZ$-graded right $A$-modules
by the full triangulated subcategory $D^\perf(\gr A)$
consisting of perfect complexes.
Here, an object of $D^b(\gr A)$ is {\em perfect}
if it is quasi-isomorphic
to a bounded complex of projective modules.
Stable derived categories are introduced
by Buchweitz \cite{Buchweitz_MCM}
motivated by the theory of {\em matrix factorizations}
by Eisenbud \cite{Eisenbud_HACI}.
Stable derived categories are also known as
{\em triangulated categories of singularities},
introduced by Orlov \cite{Orlov_TCS}
based on an idea of Kontsevich
to study B-branes on Landau-Ginzburg models.

Let $R = k[x_1,\ldots,x_{n+1}]$ be a polynomial ring
in $n+1$ variables over a field $k$.
We equip $R$ with a $\bZ$-grading such that
$\deg x_i = 1$ for all $i$.
Let $G$ be a finite subgroup of $SL_{n+1}(k)$
whose order is not divisible by the characteristic of $k$.
Assume that the natural action of $G$
on the affine space $\bA^{n+1} = \Spec R$
is free outside of the origin.
This assumption is equivalent to the condition
that the invariant subring $A = R^G$ has
an isolated singularity at the origin
\cite[Corollary 8.2]{Iyama-Yoshino_MTC}.
Two examples of
the stable derived categories of $A$ are studied
by Iyama and Yoshino \cite{Iyama-Yoshino_MTC}
and Keller, Murfet and Van den Bergh
\cite{Keller-Murfet-Van_den_Bergh}.
The general case is studied by Iyama and Takahashi
\cite{Iyama-Takahashi_TCTQS}.

Let $d \in \bN$ be a divisor of $n+1$ and
$
 B = \bigoplus_{i=0}^\infty A_{i d}
$
be the $d$-th Veronese subring of $A$.
We prove the following in this paper:

\begin{theorem} \label{th:main1}
The stable derived category $\Dbsing(\gr B)$
has a full exceptional collection.
\end{theorem}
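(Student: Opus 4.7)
The plan is to apply Orlov's theorem \cite{Orlov_DCCSTCS} to reduce the question to exhibiting a full exceptional collection on $D^b(\qgr B)$, and then to construct such a collection using an equivariant version of Beilinson's resolution on $\bP^n$. First I would verify that $B$ is connected graded Gorenstein with positive Gorenstein parameter: by Watanabe's theorem, $A = R^G$ is Gorenstein with parameter $n+1$ since $G \subset SL_{n+1}(k)$, and the $d$-th Veronese of a graded Gorenstein ring of parameter $a$ is again Gorenstein with parameter $a/d$ whenever $d \mid a$. Hence $B$ is Gorenstein with positive parameter $a_B = (n+1)/d$.

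Since $a_B > 0$, Orlov's theorem provides a semi-orthogonal decomposition of $D^b(\qgr B)$ in which $\Dbsing(\gr B)$ appears as an admissible subcategory whose orthogonal complement is generated by $a_B$ exceptional line bundles $\scO_{\Proj B}(i)$. It therefore suffices to produce a full exceptional collection on $D^b(\qgr B)$: the desired collection on $\Dbsing(\gr B)$ is then obtained by mutating those $a_B$ line bundles out of the ambient collection, using standard facts about admissible subcategories of triangulated categories admitting a full exceptional collection.

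To construct the collection on $D^b(\qgr B)$, I would observe that $B = R^{\Gtilde}$, where $\Gtilde \subset SL_{n+1}(k)$ is the finite subgroup generated by $G$ and the scalar matrices $\mu_d$ (which lie in $SL_{n+1}$ precisely because $d \mid n+1$). The stack-theoretic $\Proj B$ is then a $\mu_d$-gerbe over the quotient stack $[\bP^n / G]$ arising from the Veronese grading, and $D^b(\qgr B)$ can be identified with a suitably twisted $\Gtilde$-equivariant bounded derived category of $\bP^n$. The classical Beilinson collection $\scO, \scO(1), \ldots, \scO(n)$ on $\bP^n$, tensored with the irreducible $\Gtilde$-representations, then yields a full (indeed strong) exceptional collection, using that $\Rep \Gtilde$ is semisimple with only finitely many simple objects, which holds once $|\Gtilde|$ is coprime to the characteristic of $k$.

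The main obstacle I anticipate is the clean identification of $D^b(\qgr B)$ with this equivariant derived category on $\bP^n$, carefully tracking the $\mu_d$-gerbe structure induced by the Veronese grading and describing how Orlov's distinguished line bundles $\scO_{\Proj B}(i)$ sit inside the equivariant Beilinson collection. Once that identification is nailed down, extracting the full exceptional collection on $\Dbsing(\gr B)$ by mutation is a formal manipulation of semi-orthogonal decompositions, and in particular the strongness of the ambient collection need not survive the mutation, consistent with the statement claiming only a full (not necessarily strong) exceptional collection.
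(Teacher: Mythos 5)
Your proposal is correct in outline and follows essentially the same route as the paper: establish that $B$ is Gorenstein with parameter $a=(n+1)/d$ via Watanabe and Goto--Watanabe, apply Orlov's semiorthogonal decomposition, produce a full strong exceptional collection on an equivariant derived category of $\bP^n$ by tensoring the Beilinson collection with irreducible representations, and mutate the $a$ objects $\pi(B(j))$ to the front. The one point you should fix is the gerbe statement, which is backwards: one has $\Proj B = [\bP^n/G_d]$ with $G_d = G/T_d$ and $T_d = G\cap\mu_d$ a cyclic group of order $e$ dividing $d$, so $\Proj B$ is the \emph{base} (the rigidification), and it is $\Proj A = [\bP^n/G]$ (resp.\ $[\bP^n/\Gtilde]$ with $\Gtilde=\langle G,\mu_d\rangle$) that is a $T_d$-gerbe (resp.\ $\mu_d$-gerbe) over $\Proj B$, not the other way around; the paper realizes this by identifying $\Proj A$ with the root stack $\sqrt[e]{\scO_X(e)/X}$ of $X=\Proj B$. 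Consequently $D^b(\qgr B)$ is not itself a twisted equivariant category of $\bP^n$ but the \emph{untwisted} orthogonal summand of $D^b \coh^{\Gtilde}\bP^n$ (equivalently, of $D^b\coh^G\bP^n$) in the decomposition by central character of the generic stabilizer. The step you leave implicit --- that the ambient full strong exceptional collection $(\scO(i)\otimes\rho)$ partitions into full exceptional collections of the individual orthogonal summands --- is exactly where the paper argues that an exceptional object is indecomposable and hence lies in a single summand of an orthogonal decomposition; one also needs that $\pi(B(j))=\scO_X(jd)$ pulls back to a member of the collection lying in the untwisted summand, which holds because $e$ divides $d$ and $jd\le n$ for $j\le a-1$. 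With these corrections your argument goes through and coincides with the paper's proof.
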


The full exceptional collection
given in Theorem \ref{th:main1}
is strong when $d = n+1$.
One the other hand,
a result of Iyama and Takahashi
\cite[Theorem 1.7]{Iyama-Takahashi_TCTQS}
gives a full strong exceptional collection
for $d = 1$.
The proof of Theorem \ref{th:main1} is based
on the existence of a full strong exceptional collection
in the derived category of coherent sheaves on the stack
$\Proj B = [(\Spec B \setminus \bszero) / \bG_m]$
and a result of Orlov
\cite[Theorem 2.5.(i)]{Orlov_DCCSTCS}.
%

Next we discuss equivariant graded stable derived categories.
Let $A$ be an $\bN$-graded connected Gorenstein ring
with parameter $a > 0$
and $G$ be a finite group acting on $A$
whose order is not divisible by the characteristic of $k$.
The {\em crossed product algebra} $A \rtimes G$ is the vector space
$A \otimes k[G]$ equipped with the ring structure
$$
 (a_1 \otimes g_1) \cdot (a_2 \otimes g_2)
  = a_1 \cdot g_1 (a_2) \otimes g_1 \circ g_2,
$$
where $k[G]$ is the group ring of $G$.
A right $A \rtimes G$-module is often called
a {\em $G$-equivariant $A$-module}.
The crossed product algebra $A \rtimes G$ inherits a grading from $A$
so that the degree zero part is given by the group ring;
$(A \rtimes G)_0 = k[G]$.
This graded ring is not connected
if $G$ is non-trivial.

Let $\gr^G A$ be the abelian category
of finitely-generated $\bZ$-graded right $A \rtimes G$-modules and
$\tor^G A$ be its Serre subcategory
consisting of finite-dimensional modules.
The quotient abelian category is denoted by
$$
 \qgr^G A = \gr^G A / \tor^G A.
$$
If $A$ is commutative,
then $\qgr^G A$ is equivalent
to the abelian category $\coh^G (\Proj A)$
of $G$-equivariant coherent sheaves
of the stack
$
 \Proj A = [(\Spec A \setminus \bszero) / \bG_m].
$
Let $\Irrep(G) = \{ \rho_0, \ldots, \rho_r \}$
be the set of irreducible representations of $G$
where $\rho_0$ is the trivial representation.
For any $k \in \bZ$, the image of the graded $A \rtimes G$-module
$A(k) \otimes \rho_i$
by the projection functor
$\pi : \gr A \rtimes G \to \qgr^G A$
will be denoted by
$
 \scO(k) \otimes \rho_i.
 $
The following is a straightforward generalization of
\cite[Theorem 2.5.(i)]{Orlov_DCCSTCS}:

\begin{theorem} \label{th:main3}
There is a full and faithful functor
$
 \Phi : \Dbsing(\gr^G A) \to D^b (\qgr^G A)
$
and a semiorthogonal decomposition
\begin{align*}
 D^b(\qgr^G A) = \langle \scO \otimes \rho_0, &\ldots,
  \scO \otimes \rho_r,
  \scO(1) \otimes \rho_0, \ldots,
  \scO(1) \otimes \rho_r, \ldots,
   \\
  &\scO(a-1) \otimes \rho_0, \ldots, \scO(a-1) \otimes \rho_r,
  \Phi \Dbsing(\gr^G A) \rangle.
\end{align*}
\end{theorem}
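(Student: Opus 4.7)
The strategy is to adapt Orlov's proof of \cite[Theorem 2.5.(i)]{Orlov_DCCSTCS} to the $G$-equivariant setting. Since the order of $G$ is invertible in $k$, the group ring $k[G]$ is semisimple, the forgetful functor $\gr^G A \to \gr A$ is exact, and it admits both adjoints via induction and restriction along $A \hookrightarrow A \rtimes G$. The role of the base field $k$ in Orlov's argument is taken by $k[G]$, with the $r+1$ simple summands $\rho_0,\ldots,\rho_r$, while the single structure sheaf $\scO$ is replaced by the family $\{\scO\otimes \rho_i\}_{i=0}^{r}$. Accordingly, each block $\scO(k)$ in Orlov's decomposition is replaced by the block $\{\scO(k)\otimes\rho_i\}_{i=0}^{r}$.

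The first technical input is the equivariant form of the Gorenstein property. Applying $-\otimes_A (A\rtimes G)$ to the identity $\RHom_A(k,A)=k(a)[-n]$ and decomposing the regular representation yields, for every irreducible $\rho_i$,
\begin{equation*}
 \RHom_{A\rtimes G}(\rho_i, A\rtimes G) \cong \rho_i^{\ast}(a)[-n],
\end{equation*}
so that $A\rtimes G$ is Gorenstein in the generalized sense (degree-zero part semisimple, rather than a field) with the same parameter $a$. This is the only place where the Gorenstein hypothesis on $A$ is used in Orlov's argument, and the conclusion has the same form after replacing $k$ by the semisimple ring $k[G]$.

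With this in hand, the plan is as follows. First, reproduce Orlov's construction of the functor $\Phi$: a singularity object is represented by a module generated in degrees $\geq N$ for $N\gg 0$, and one sets $\Phi$ equal to the composition with the projection $\pi\colon\gr^G A\to\qgr^G A$; the calculation that this is well-defined and fully faithful uses only exactness of $\pi$, the Serre-quotient description of $\qgrG A$, and the equivariant Gorenstein property above. Second, verify that $\{\scO(k)\otimes\rho_i\}_{0\leq k<a,\,0\leq i\leq r}$ is an exceptional collection and that the $\Ext$-vanishings required for semiorthogonality reduce, via restriction to $A$ and the isotypic decomposition for $k[G]$, to the corresponding non-equivariant vanishings in \cite{Orlov_DCCSTCS}. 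Third, check that $\Phi(\Dbsing(\gr^G A))$ is right-orthogonal to this exceptional block, again by the same reduction. Finally, the generation statement follows from the fact that the shifted twists $A(k)\otimes\rho_i$ for $k\in\bZ$ and $0\leq i\leq r$ generate $D^b(\gr^G A)$ modulo finite-dimensional modules.

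The main obstacle is purely bookkeeping: one must check that each step of Orlov's argument respects the $G$-equivariance. This is guaranteed at each instance by the semisimplicity of $k[G]$ and the exactness of the induction and restriction functors between $\gr A$ and $\gr^G A$, so that every $\RHom$, truncation and spectral sequence in Orlov's proof can be promoted to a $k[G]$-equivariant statement and then decomposed into isotypic components indexed by the $\rho_i$. No new conceptual input is needed beyond the equivariant Gorenstein identity displayed above.
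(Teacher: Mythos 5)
Your proposal is correct and follows essentially the same route as the paper: both adapt Orlov's argument wholesale to the equivariant setting, with the key input being that $A \rtimes G$ is Gorenstein with the same parameter $a$ over the semisimple degree-zero part $k[G]$, which is exactly what the paper uses when it shows the duality functor $D = \RHom_{A\rtimes G}(\bullet, A\rtimes G)$ carries $\scS_{\ge i}$ to $\scS_{<-i-a+1}$ and then reruns Orlov's admissible-subcategory bookkeeping. The only caveat is that your displayed identity should in general read $\RHom_{A\rtimes G}(\rho_i, A\rtimes G) \cong (\rho_i\otimes\chi)^{\ast}(a)[-n]$ for a fixed character $\chi$ of $G$ acting on the socle $\RHom_A(k,A)$, but since tensoring by $\chi$ merely permutes $\Irrep(G)$ this does not affect the conclusion.
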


Let
$
 R = k[x_1, \ldots, x_{n+1}]
$
be a polynomial ring in $n+1$ variables and
$
 A = \bigoplus_{i=0}^\infty R_{i d}
$
be the $d$-th Veronese subring.
We assume that $d$ is a divisor of $n+1$
so that $A$ is Gorenstein with parameter $a = (n+1)/d$.
Let $G$ be any finite subgroup of $\GL_{n+1}(k)$
whose order is not divisible by the characteristic of $k$.
We have the following corollary of
Theorem \ref{th:main3}:

\begin{theorem} \label{th:main2}
The stable derived category $\Dbsing(\gr^G A)$
has a full strong exceptional collection.
\end{theorem}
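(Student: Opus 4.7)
The plan is to deduce Theorem \ref{th:main2} from Theorem \ref{th:main3} by realizing $\Dbsing(\gr^G A)$ as an admissible subcategory of $D^b(\coh^G \bP^n)$ and extracting a collection from an ambient full strong exceptional collection of the latter.

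Since $A$ is the $d$-th Veronese of $R$ with $d \mid n+1$, the ring $A$ is Gorenstein with parameter $a = (n+1)/d$, and because $A$ is commutative, Noetherian, and generated in degree one, the Serre equivalence gives $\qgr^G A \simeq \coh^G \bP^n$ with $\scO_A(k) \cong \scO_{\bP^n}(kd)$. Theorem \ref{th:main3} applied to $A$ then yields a semiorthogonal decomposition
\begin{align*}
 D^b(\coh^G \bP^n) = \langle \scO_{\bP^n}(0) \otimes \rho_0, \ldots, \scO_{\bP^n}((a-1)d) \otimes \rho_r, \Phi \Dbsing(\gr^G A) \rangle
\end{align*}
whose leftmost $a(r+1)$ entries form a strong exceptional block, as one checks directly via cohomology of line bundles on $\bP^n$.

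On the other hand, applying the same Theorem \ref{th:main3} to the polynomial ring $R$---which is smooth, so that $\Dbsing(\gr^G R) = 0$, and which has Gorenstein parameter $n+1$---recovers the equivariant Beilinson collection
\begin{align*}
 D^b(\coh^G \bP^n) = \langle \scO_{\bP^n}(0) \otimes \rho_0, \ldots, \scO_{\bP^n}(n) \otimes \rho_r \rangle,
\end{align*}
which is full and strong. I would then mutate this collection so as to bring the $a(r+1)$ objects $\scO_{\bP^n}(kd) \otimes \rho_j$ with $0 \le k \le a-1$ into the first $a(r+1)$ positions. The remaining $(n+1-a)(r+1)$ mutated objects lie in the right-orthogonal of that prefix, which by the preceding paragraph is exactly $\Phi \Dbsing(\gr^G A)$, so they form a full exceptional collection on $\Dbsing(\gr^G A)$.

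The main obstacle is verifying that this inherited collection is \emph{strong}, not merely exceptional, since general mutations do not preserve strongness---as already illustrated by the non-equivariant collection of Theorem \ref{th:main1}, which is strong only for $d = 1$ or $d = n+1$. I would handle this by computing the mutated objects explicitly: iterated left mutations among line bundles on $\bP^n$ produce twists of cotangent sheaves $\Omega^p_{\bP^n}(p + c)$ via the Euler sequence, and higher Ext groups among such sheaves are controlled by Bott vanishing on $\bP^n$. Since $|G|$ is invertible in $k$, the category $\coh^G \bP^n$ decomposes into isotypic components for $G$, and higher Ext calculations between $G$-equivariant bundles reduce to the corresponding non-equivariant computation on $\bP^n$---which is where the actual strongness check takes place.
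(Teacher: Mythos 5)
Your route is the same as the paper's in outline (apply Theorem \ref{th:main3} to $A=R^{(d)}$, identify $\qgr^G A\simeq \coh^G\bP^n$, and compare with the equivariant Beilinson collection), but your normalization $\scO_A(k)\cong\scO_{\bP^n}(kd)$ --- which is the correct one for a $d$-th Veronese subring --- makes the prefix of the semiorthogonal decomposition equal to $\{\scO_{\bP^n}(kd)\otimes\rho_j\}_{0\le k\le a-1}$ rather than $\{\scO_{\bP^n}(k)\otimes\rho_j\}_{0\le k\le a-1}$, and this forces the mutation step you describe. The paper's own proof performs no mutation: it reads off the complement as the literal tail $(\scO_{\bP^n}(a)\otimes\rho_0,\ldots,\scO_{\bP^n}(n)\otimes\rho_r)$ of the Beilinson collection, which is consistent only with the identification $\scO_A(k)=\scO_{\bP^n}(k)$ and hence only covers $d=1$ or $a=1$ (the cases appearing in Section \ref{sc:example}). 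So you have correctly located where the real work lies; but that work is not done in your proposal, and it is precisely the part that makes the theorem say ``strong'' rather than merely ``full exceptional collection.''

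Concretely, the gap is the strongness verification. Your plan --- identify the mutated objects as twists $\Omega^p_{\bP^n}(p+c)$ and invoke Bott vanishing --- does not apply as stated, because the mutations here are through the \emph{non-consecutive} twists $\scO(kd)$: a single right mutation produces $\Cone\bigl(\scO(j)\to S^{kd-j}V\otimes\scO(kd)\bigr)$, which is a twisted cotangent or tangent bundle only when $kd-j=\pm1$, and the iterated mutations leave the family $\Omega^p(c)$ entirely once $a\ge 3$ or $d\ge 3$. In the smallest genuinely new case ($n+1=4$, $d=2$, $G$ trivial) the complement of $\langle\scO,\scO(2)\rangle$ in $D^b\coh\bP^3$ is $\langle T_{\bP^3}(1),\scO(3)\rangle$, which does happen to be strong with $\hom(T_{\bP^3}(1),\scO(3))=h^0(\Omega^1_{\bP^3}(2))=6$; note that this category is not equivalent to $\langle\scO(2),\scO(3)\rangle$ even at the level of the Euler form on K-theory, which confirms that the mutation cannot be skipped and that the naive tail of the Beilinson collection is not the answer. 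You therefore still owe a general argument that the mutated collection is strong --- and one cannot appeal to mutations preserving strongness, since the paper itself records that the analogous mutated collection in Theorem \ref{th:main1} is strong only for $d=1$ and $d=n+1$. The one reduction you do make correctly is the equivariant one: since $|G|$ is invertible, mutating through a full isotypic block $\{\scO(kd)\otimes\rho_0,\ldots,\scO(kd)\otimes\rho_r\}$ sends $E\otimes\rho$ to $(R_{\scO(kd)}E)\otimes\rho$, so all $\Ext$ computations do reduce to the non-equivariant ones on $\bP^n$; but those non-equivariant computations are exactly what remains to be carried out.
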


The organization of this paper is as follows:
In Section \ref{sc:invariant},
we study $\Proj B$
for the Veronese subring $B$ of the invariant ring
and prove Theorems \ref{th:main1}.
We prove Theorem \ref{th:main3}
in Section \ref{sc:crossed_product},
which immediately gives Theorem \ref{th:main2}.
We discuss a few examples in Section \ref{sc:example}.

{\bf Acknowledgment}:
I thank Akira Ishii for valuable discussions.
This work is supported by Grant-in-Aid for Young Scientists (No.20740037).

\section{Invariant subrings} \label{sc:invariant}

Let $\scD$ be a triangulated category and
$\scN \subset \scD$ be a full triangulated subcategory.
The {\em right orthogonal} to $\scN$ is the full
subcategory $\scN^\bot \subset \scD$ consisting of objects
$M$ such that $\Hom(N, M) = 0$
for any $N \in \scN$.
The left orthogonal $\!^\bot \scN$ is defined
similarly by $\Hom(M, N) = 0$
for any $N \in \scN$.
A full triangulated subcategory $\scN$
of a triangulated category $\scD$ is {\em left admissible}
if any $X \in \scD$ sits inside a distinguished triangle
$N \to X \to M \xto{[1]} N$
such that $N \in \scN$ and $M \in \scN^\bot$.
Right admissible subcategories are defined similarly.
A sequence $(\scN_1, \ldots, \scN_n)$ of full triangulated
subcategories is a {\em weak semiorthogonal decomposition}
if there is a sequence
$
 \scN_1 = \scD_1 \subset \scD_2 \subset \cdots \subset \scD_n = \scD
$
of left admissible subcategories
such that $\scN_p$ is left orthogonal to $\scD_{p-1}$
in $\scD_p$.
The decomposition is {\em orthogonal}
if $\Hom(N, M) = 0$ for
any $N \in \scN_i$ and $M \in \scN_j$
with $i \ne j$.

Let $k$ be a field and
$\scD$ be a $k$-linear triangulated category.
An object $E$ of $\scD$ is {\em exceptional} if
$\Hom(E, E)$ is spanned by the identity morphism and
$\Ext^i(E, E) = 0$ for $i \ne 0$.
A sequence $(E_1, \ldots, E_r)$ of exceptional objects
is an {\em exceptional collection}
if $\Ext^i(E_j, E_\ell) = 0$ for any $i$ and any $1 \le \ell < j \le r$.
An exceptional collection is {\em strong}
if $\Ext^i(E_j, E_\ell) = 0$ for any $i \ne 0$ and any $1 \le j \le \ell \le r$.
An exceptional collection is {\em full}
if the smallest full triangulated subcategory
of $\scD$ containing it is the whole of $\scD$.

Let $G$ be a finite subgroup of $SL_{n+1}(k)$
acting freely on $\bA^{n+1} \setminus \bszero$.
We assume that
the order of $G$ is not divisible by the characteristic
of the base field $k$.
The set of irreducible representations of $G$
will be denoted by $\Irrep(G) = \{ \rho_0, \ldots, \rho_r \}$
where $\rho_0$ is the trivial representation.
Let further $R = k[x_1, \ldots, x_{n+1}]$
be the coordinate ring of $\bA^{n+1}$ and
$A = R^G$ be the invariant subring.
Equip $R$ with the $\bN$-grading
such that $\deg x_i = 1$ for all $i = 1, \ldots, n+1$,
which induces an $\bN$-grading on $A$.
This defines a $\bG_m$-action on $\Spec A$,
and let
$$
 Y := \Proj A
  = [(\Spec A \setminus \bszero) / \bG_m]
  = [((\Spec R \setminus \bszero) / G) / \bG_m]
$$
be the quotient stack.
The abelian category $\coh^G \bP^n$
of $G$-equivariant coherent sheaves on $\bP^n$
is equivalent to
the abelian category $\coh Y$
of coherent sheaves on $Y$,
which in turn is equivalent to the quotient category
$$
 \qgr A = \gr A / \tor A
$$
of the abelian category
$\gr A$ of finitely-generated $\bZ$-graded $A$-modules
by the Serre subcategory
consisting of finite-dimensional modules
\cite[Proposition 2.17]{Orlov_DCCSTCS}.
Note that $G$-action on $\bP^n$ may not be free.

The following theorem is due to Beilinson:

\begin{theorem}[{Beilinson \cite{Beilinson}}]
 \label{th:Beilinson}
$D^b \coh \bP^n$ has a full strong exceptional collection
$$
 (\scO_{\bP^n}, \scO_{\bP^n}(1), \ldots, \scO_{\bP^n}(n))
$$
consisting of line bundles.
\end{theorem}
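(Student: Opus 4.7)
The plan is to establish the two defining properties—strong exceptionality and fullness—separately, both resting on standard input about projective space.

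For exceptionality and strongness, use Serre's computation to identify
$$\Ext^i(\scO(a),\scO(b)) \cong H^i(\bP^n, \scO(b-a)).$$
When $0 \le a, b \le n$, the twist $b-a$ lies in $\{-n, \ldots, n\}$. For $-n \le b-a \le -1$ the entire graded cohomology $H^*(\bP^n, \scO(b-a))$ vanishes, which (labelling $E_j = \scO(j-1)$) gives $\Ext^i(E_j, E_\ell)=0$ for all $i$ whenever $\ell < j$, i.e.\ the exceptional collection axiom. For $0 \le b-a \le n$ only $H^0$ is nonzero by Serre vanishing, which is the strong collection condition; specialising to $a=b$ gives $\Hom(\scO(a),\scO(a)) = k$ and vanishing higher $\Ext$, confirming that each $\scO(a)$ is exceptional.

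For fullness, apply the Beilinson resolution of the diagonal $\Delta \subset \bP^n \times \bP^n$:
$$0 \to \scO(-n) \boxtimes \Omega^n(n) \to \cdots \to \scO(-1) \boxtimes \Omega^1(1) \to \scO \boxtimes \scO \to \scO_\Delta \to 0.$$
Given $F \in D^b \coh \bP^n$, apply the functor $Rp_{1*}(- \otimes p_2^* F(-n))$ to this resolution, where $p_i$ are the projections. By the projection formula and flat base change, the right-hand term computes $F(-n)$, while each left-hand term becomes $\scO(-i) \otimes_k R\Gamma(\bP^n, \Omega^i(i) \otimes F(-n))$ for $i=0,\ldots,n$. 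Hence $F(-n)$ is an iterated cone of objects in $\{\scO(-n),\ldots,\scO\}$, so $F(-n)\in\langle\scO(-n),\ldots,\scO\rangle$. Tensoring by the autoequivalence $(-)\otimes\scO(n)$ then gives $F \in \langle \scO, \scO(1), \ldots, \scO(n) \rangle$, proving fullness.

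The principal obstacle is the construction and exactness of the Beilinson resolution: one realises it as the Koszul complex of the tautological section of $p_1^* \scO(1) \otimes p_2^* T_{\bP^n}(-1)$ whose zero scheme is exactly $\Delta$, and one must verify via the Euler sequence that this section is regular of the expected codimension $n$. Once this geometric input is in hand, the remainder of the argument is a formal derived-category manipulation.
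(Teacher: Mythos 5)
The paper offers no proof of this statement: it is quoted as a classical result of Beilinson and used as a black box, so there is nothing internal to compare your argument against. That said, your proof is the standard one and is correct. The $\Ext$ computation reduces correctly to $H^i(\bP^n,\scO(b-a))$ with $b-a\in\{-n,\dots,n\}$, where the key point is that $H^n(\bP^n,\scO(m))=0$ for $m\ge -n$ (this is Serre's explicit computation of the cohomology of line bundles on $\bP^n$ rather than ``Serre vanishing'' in the asymptotic sense, but the input you need is exactly right). For fullness, the Koszul-resolution-of-the-diagonal argument works as you describe; the two points you should make explicit are (i) the regularity of the tautological section of $p_1^*\scO(1)\otimes p_2^*T_{\bP^n}(-1)$, which follows because its zero scheme is the diagonal, of codimension $n$ equal to the rank, and (ii) the passage from ``$F(-n)$ admits a finite resolution by sums of the $\scO(-i)$'' to ``$F(-n)$ lies in the triangulated subcategory they generate,'' which requires the usual Postnikov-tower (stupid truncation) argument, since a resolution is not literally a single distinguished triangle. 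With those spelled out, the argument is complete.
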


As an immediate corollary to Theorem \ref{th:Beilinson},
we have the following:

\begin{corollary} \label{cr:PmodG}
$D^b \coh^G \bP^n$ has a full strong exceptional collection
$$
 (\scO_{\bP^n} \otimes \rho_0, \ldots, \scO_{\bP^n} \otimes \rho_r,
  \scO_{\bP^n}(1) \otimes \rho_0, \ldots, \scO_{\bP^n}(1) \otimes \rho_r,
   \ldots, \scO_{\bP^n}(n) \otimes \rho_0, \ldots, \scO_{\bP^n}(n) \otimes \rho_r).
$$
\end{corollary}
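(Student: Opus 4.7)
The plan is to reduce each property to the non-equivariant Beilinson theorem (Theorem \ref{th:Beilinson}) by exploiting that $G\subset SL_{n+1}(k)$ endows each $\scO_{\bP^n}(i)$ with a canonical $G$-equivariant structure, and that $|G|\in k^\times$ makes the $G$-invariants functor exact.

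First I would compute every equivariant $\Ext$ by
\[
 \Ext^i_G(\scO_{\bP^n}(a)\otimes\rho_s,\,\scO_{\bP^n}(b)\otimes\rho_t)
  \;\cong\; \bigl[H^i(\bP^n,\scO_{\bP^n}(b-a))\otimes_k \rho_s^{\vee}\otimes_k\rho_t\bigr]^G,
\]
where the $G$-action on the cohomology factor is induced by the canonical equivariant structures on the line bundles. In the lexicographic order on $(a,s)$ specified by the statement, if $(a,s)$ strictly follows $(b,t)$ then either $a>b$, in which case $b-a\in\{-n,\ldots,-1\}$ misses the nonvanishing ranges of both $H^0$ and $H^n$ on $\bP^n$ so every cohomology group vanishes, or $a=b$ with $s>t$, in which case the cohomology factor is $k$ with trivial $G$-action and Schur's lemma forces $(\rho_s^{\vee}\otimes\rho_t)^G=0$. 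For strongness: whenever $(a,s)$ weakly precedes $(b,t)$ we have $b-a\ge 0$, and $H^i(\bP^n,\scO_{\bP^n}(b-a))=0$ for $i>0$ directly from the cohomology of $\bP^n$.

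Fullness is the step requiring the most care. My plan is to equivariantize Beilinson's semiorthogonal decomposition $D^b\coh\bP^n=\langle \scO_{\bP^n},\ldots,\scO_{\bP^n}(n)\rangle$: each block $\langle\scO_{\bP^n}(i)\rangle\simeq D^b(k\text{-mod})$ is stable under the $G$-action (because each $\scO_{\bP^n}(i)$ is $G$-equivariant), and the equivariantization functor, which is exact since $|G|\in k^\times$, produces a semiorthogonal decomposition of $D^b\coh^G\bP^n$ with $n+1$ blocks each equivalent to $D^b\Rep(G)$. By Maschke's theorem, $(\rho_0,\ldots,\rho_r)$ is a full strong exceptional collection in $D^b\Rep(G)$, and concatenating over $i=0,\ldots,n$ yields the collection in the statement. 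If the formal compatibility of equivariantization with semiorthogonal decompositions needs elaboration, a backup route is to apply Fourier--Mukai through Beilinson's (diagonally $G$-equivariant) resolution of the diagonal on $\bP^n\times\bP^n$, directly writing any $\scF\in\coh^G\bP^n$ as an iterated extension of objects of the form $\scO_{\bP^n}(i)\otimes V_i$ with $V_i\in D^b\Rep(G)$, and then decomposing each $V_i$ into irreducibles.
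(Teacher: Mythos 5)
The paper offers no written proof here — it presents the statement as an immediate consequence of Beilinson's theorem via the standard equivariantization of the collection $(\scO_{\bP^n},\ldots,\scO_{\bP^n}(n))$, which is exactly what you carry out: the $\Ext$ computation by taking $G$-invariants and the fullness argument via the ($G$-equivariant) Beilinson decomposition are the details the paper leaves implicit. Your argument is correct and follows the same route.
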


Let $d$ be a divisor of $n+1$ and
$
 B = \bigoplus_{i \in \bZ} A_{i d}
$
be the $d$-th Veronese subring of $A = R^G$.
Let further
$
 G_d = G / T_d
$
be the quotient of $G$
by the diagonal subgroup
$
 T_d = \{ \zeta \cdot \id_{\bA^{n+1}} \in G
  \mid \zeta^d = 1 \}
$
consisting of $d$-th roots of unity.
Then $B$ is the invariant subring
$(R^{(d)})^{G_d}$
of the $d$-th Veronese subring $R^{(d)}$
of $R = k[x_1, \ldots, x_{n+1}]$,
and one has
$$
 X := \Proj B = [((\Spec R^{(d)} \setminus \bszero) / G_d) / \bG_m].
$$
The group $T_d$ is a cyclic group
whose order $e$ is a divisor of $d$.
If $T_d$ is non-trivial,
then $G_d$ is not a subgroup of $\SL_{n+1}(k)$
but a subgroup of its quotient $\SL_{n+1}(k) / T_d$,
and the line bundle $\scO_{\bP^n}(1)$ does not have
a $G_d$-linearization.
On the other hand,
the line bundle $\scO_{\bP^n}(e)$ does have a $G_d$-linearization
and descends to a line bundle $\scO_X(e)$
on $X$.

Recall that the {\em root stack} $\sqrt[e]{\scL/X}$
of a line bundle $\scL$ on a stack $X$ is the stack
whose object over $\varphi : T \to X$ is
a line bundle $\scM$ on $T$
together with an isomorphism
$
 \scM^{\otimes e} \simto \varphi^* \scL
$
\cite{Abramovich-Graber-Vistoli, Cadman_US}.
The morphism $G \to G_d$ of finite groups induces a morphism
$p : Y \to X$ of quotient stacks,
and the isomorphism
\begin{equation*} 
 \phi : \scO_Y(1)^{\otimes e} \simto \pi^* \scO_X(e)
\end{equation*}
of line bundles gives an identification of
$Y$ with the root stack
$
 \sqrt[e]{\scO_X(e) / X}.
$
It follows that
there is an orthogonal decomposition
\begin{equation} \label{eq:dbcohY}
 D^b \coh Y = 
  \la
   p^* D^b \coh X, \ 
   \scO_Y(1) \otimes p^* D^b \coh X,
   \cdots,
   \scO_Y(e-1) \otimes p^* D^b \coh X
  \ra
\end{equation}
of the derived category
\cite[Lemma 4.1]{Ishii-Ueda_SMEC}.

The invariant ring $A$ is Gorenstein
with parameter
$
 \deg x_1 + \cdots + \deg x_{n+1}
  = n+1
$
by Watanabe \cite[Theorem 1]{Watanabe_CISG},
and its Veronese subring $B$ is Gorenstein
with parameter $a = (n+1)/d$
by Goto and Watanabe
\cite[Corollary 3.1.5]{Goto-Watanabe_GRI}.
The following theorem is due to Orlov:

\begin{theorem}[{\cite[Theorem 2.5.(i)]{Orlov_DCCSTCS}}]
 \label{th:Orlov}
If $B$ is a Gorenstein ring with parameter $a > 0$,
then there is a full and faithful functor
$
 \Phi : \Dbsing(\gr B) \to D^b (\qgr B)
$
and a semiorthogonal decomposition
\begin{align*}
 D^b(\qgr B) = \langle \pi B, \ldots, \pi (B(a-1)),
  \Phi \Dbsing(\gr B) \rangle,
\end{align*}
where $\pi : \gr B \to \qgr B$ is the natural projection functor.
\end{theorem}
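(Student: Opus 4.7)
Since this is a cited result of Orlov, my plan is to sketch his strategy for comparing the graded module category with the category of coherent sheaves on $\Proj B$. First I would set up the basic functorial package: the exact projection $\pi : \gr B \to \qgr B$ admits a right adjoint $\omega$, and for any $M \in D^b(\gr B)$ there is a natural triangle relating $M$ to the derived sheafification $\bR\omega\,\pi M$ via the graded local cohomology $\RGamma_\frakm(M)$. The Gorenstein hypothesis $\RHom_B(k, B) = k(a)[-n]$ promotes to local duality, identifying $\RGamma_\frakm(M)$ with the graded Matlis dual of $\RHom_B(M, B)$ up to a shift determined by $a$ and $n$. This duality is the main computational tool throughout.

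Next I would construct $\Phi$. Given $M \in \Dbsing(\gr B)$, lift it to $\Mtilde \in D^b(\gr B)$, choose a sufficiently negative truncation $\Mtilde_{\geq m}$, and set $\Phi M := \pi \Mtilde_{\geq m}$. The essential claims are that changing the lift by a perfect complex or shifting the truncation level for $m \ll 0$ alters $\pi \Mtilde_{\geq m}$ only within the triangulated subcategory generated by $\pi B, \pi B(1), \ldots, \pi B(a-1)$, and that the resulting functor is fully faithful. Fully-faithfulness reduces, via the local cohomology triangle, to the vanishing of $\Hom$-groups of the form $\Hom_{\gr B}(\Mtilde_{\geq m}, N(i))$ outside a bounded range, which follows from finite generation together with the Gorenstein condition controlling $\Ext$ into twists of $B$.

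Finally I would establish the semiorthogonal decomposition. Local duality makes each $\pi B(i)$ exceptional and shows that $(\pi B, \ldots, \pi B(a-1))$ forms an exceptional collection with $\Phi \Dbsing(\gr B)$ sitting in the appropriate orthogonal: the vanishing $\Hom_{\qgr B}(\Phi M, \pi B(i)) = 0$ for $0 \leq i \leq a-1$ is exactly what local duality applied to $\Mtilde_{\geq m}$ produces, once one uses that $M$ represents zero in $\Dbsing(\gr B)$ if and only if its lift is perfect. Generation follows because any object of $\qgr B$ admits a resolution by the twists $\pi B(i)$ with $i \geq 0$, while twists with $i \geq a$ can be rewritten modulo $\Phi \Dbsing(\gr B)$ using Gorenstein duality, the parameter $a$ being precisely the threshold at which this rewriting becomes necessary.

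The main obstacle is coordinating the three ingredients—well-definedness of $\Phi$, its fully-faithfulness, and the orthogonality—so that the combinatorics of the $a$ shifts $0, 1, \ldots, a-1$ matches up consistently. Concretely, one must track how local cohomology measures the discrepancy between $\Hom$-groups in $\gr B$ and in $\qgr B$, and how the specific value of the Gorenstein parameter $a$ controls both the number of exceptional line-bundle pieces needed and the range in which perfect complexes (which vanish in $\Dbsing$) absorb the remaining obstructions; this bookkeeping is the technical heart of Orlov's argument.
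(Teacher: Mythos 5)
First, a point of reference: the paper does not actually prove this statement --- it is quoted from Orlov --- and the only argument in the paper playing the role of a proof is the proof of Theorem \ref{th:main3} in Section \ref{sc:crossed_product}, which reruns Orlov's argument in the $G$-equivariant setting. Measured against that, your sketch is a correct outline of the theorem but packages the key duality differently. The paper (following Orlov) works entirely with admissible subcategories of $D^b(\gr B)$: the torsion subcategories $\scS_{<i}$, the projective subcategories $\scP_{<i}$, the decompositions $D^b(\gr B_{\ge i}) = \la \scD_i, \scS_{\ge i}\ra = \la \scP_{\ge i}, \scT_i\ra$ with $\scD_i \simeq D^b(\qgr B)$ and $\scT_i \simeq \Dbsing(\gr B)$, and a single input from the Gorenstein hypothesis, namely that $\RHom_B(-,B)$ carries $\scS_{\ge i}$ to $\scS_{<-i-a+1}$ over the opposite ring, whence $\scS_{\ge i}^\bot = {}^\bot\scP_{\ge i+a}$. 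That one identity, with its shift by $a$, is the sole source of the window $\pi B, \ldots, \pi B(a-1)$; everything else is formal shuffling of weak semiorthogonal decompositions. Your route through $\RGamma_\frakm$, the adjoint $\omega$, and graded local duality is the geometric incarnation of the same duality and can be made to work, but it front-loads the argument onto explicit $\Hom$-computations rather than letting the admissible-subcategory formalism do the bookkeeping; the trade-off is that your version makes the exceptionality of the $\pi B(i)$ and the orthogonality to $\Phi\Dbsing(\gr B)$ more transparent, at the cost of a more delicate construction of $\Phi$.

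That construction is the one place where your sketch is genuinely too loose rather than merely compressed. Defining $\Phi M = \pi\Mtilde_{\ge m}$ for an \emph{arbitrary} lift $\Mtilde$ and claiming the ambiguity lies in $\la \pi B, \ldots, \pi B(a-1)\ra$ is false as stated: changing the lift by a perfect complex changes $\pi\Mtilde_{\ge m}$ by objects built from $\pi B(j)$ with $j$ ranging over all the twists occurring in that perfect complex, and $\pi B(j)$ for $j$ outside the window $0 \le j \le a-1$ does not lie in the subcategory generated by the window (if it did, the asserted semiorthogonal decomposition would collapse). Orlov sidesteps this by taking the representative of $M$ inside $\scT_i$, the right orthogonal to $\scP_{\ge i}$ in $D^b(\gr B_{\ge i})$, which fixes the lift up to exactly the allowed ambiguity; concretely this amounts to choosing a maximal Cohen--Macaulay representative with generators in controlled degrees before truncating. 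This is repairable, but it is precisely the step your closing ``bookkeeping'' remark glosses over, and it is where a naive implementation of your plan would break.
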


Now we prove Theorem \ref{th:main1}.
First consider the case $d = 1$.
Recall that the {\em right mutation}
of an exceptional collection is given by
$$
 (E, F)
  \mapsto (F, R_F E)
$$
where $R_F E$ is the mapping cone
$$
 R_F E = \{ E \to \hom(E, F)^\vee \otimes F \}.
$$
See \cite{Rudakov} and references therein
for more about mutations of exceptional collections.
Write $E_{i,j} = \scO_Y(i) \otimes \rho_j$ and
perform successive right mutations
\begin{align*}
 (E_{0,0}, &\ldots, E_{0,r},
  E_{1,0}, \ldots, E_{1,r},
  \ldots,
  E_{n,0}, \ldots, E_{n,r}) \\
 &\mapsto
 (E_{0,0}, \ldots, E_{0,r-1},
  E_{1,0}, R_{E_{1,0}} E_{0,r},
  E_{1,1}, \ldots, E_{1,r},
  \ldots,
  E_{n,0}, \ldots, E_{n,r}) \\
 &\mapsto
 (E_{0,0}, \ldots, E_{0,r-2},
  E_{1,0},
  R_{E_{1,0}} E_{0,r-1}, R_{E_{1,0}} E_{0,r},
  E_{1,1}, \ldots, E_{1,r},
  \ldots,
  E_{n,0}, \ldots, E_{n,r}) \\
 &\mapsto \cdots \\
 &\mapsto
 (E_{0,0}, E_{1,0}, R_{E_{1,0}} E_{0,1}, \ldots,
  R_{E_{1,0}} E_{0,r},
  E_{1,1}, \ldots, E_{1,r},
  \ldots,
  E_{n,0}, \ldots, E_{n,r}) \\
 &\mapsto \cdots \\
 &\mapsto
 (E_{0,0}, E_{1,0}, E_{2,0},
  R_{E_{2,0}} R_{E_{1,0}} E_{0,1}, \ldots,
  R_{E_{2,0}} R_{E_{1,0}} E_{0,r}, \\
  &\ \hspace{5cm}
  R_{E_{2,0}} E_{1,1}, \ldots,
  R_{E_{2,0}} E_{1,r},
  E_{2,1},
  \ldots, E_{n,r}) \\
 &\mapsto \cdots \\
 &\mapsto
 (E_{0,0}, E_{1,0}, \ldots, E_{n,0},
  R_{E_{n,0}} \cdots R_{E_{1,0}} E_{0,1}, \ldots,
  R_{E_{n,0}} \cdots R_{E_{1,0}} E_{0,r}, \\
  &\ \hspace{4cm}
  R_{E_{n,0}} \cdots R_{E_{2,0}} E_{1,1}, \ldots,
  R_{E_{n,0}} E_{n-1,r},
  E_{n,1},
  \ldots, E_{n,r}) \\
  &=
 (E_{0,0}, E_{1,0}, \ldots, E_{n,0},
  F_{0,1}, \ldots, F_{0,r}, \ldots,
  F_{n,1}, \ldots, F_{n,r})
\end{align*}
where
\begin{align*}
 F_{i,j} = R_{E_{n,0}} R_{E_{n-1,0}} \cdots R_{E_{i+1,0}} E_{i,j}.
\end{align*}
Since
$
 \pi(A(i)) = \scO_Y(i) \otimes \rho_0 = E_{i,0}
$
for any $i \in \bZ$,
it follows that
$\Dbsing(\gr A)$ is equivalent
to the full triangulated subcategory of $D^b(\qgr A)$
generated by the exceptional collection
\begin{align*}
 (F_{0,1}, \ldots, F_{0,r},
  F_{1,1}, \ldots, F_{1,r},
   \ldots,
  F_{n,1}, \ldots, F_{n,r}).
\end{align*}
This proves Theorem \ref{th:main1}
in the case $d = 1$.

Now we discuss the case $d > 1$.
Since an exceptional is indecomposable and
the decomposition in \eqref{eq:dbcohY} is not only
semiorthogonal but orthogonal,
each exceptional object
in the full strong exceptional collection
$(E_{0,0}, \ldots, E_{n,r})$ on $Y$
belongs to one of orthogonal summands in \eqref{eq:dbcohY}.
It follows that the exceptional collection
in Corollary \ref{cr:PmodG} is divided
into $e$ copies of an exceptional collection,
each of which is pulled-back from $X$
and tensored with $\scO_Y(i)$ for $i = 0, \ldots, e-1$.
Let $(E_{i, j})_{(i,j) \in \Lambda}$ be the exceptional collection
generating the summand $p^*D^b \coh X$
in the orthogonal decomposition in \eqref{eq:dbcohY}.
Since $e$ divides $d$, the collection
$
 (\scO_Y, \scO_Y(d), \ldots, \scO_Y((a-1) d))
  = (p^* \scO_X, p^* \scO_X(d), \ldots, p^* \scO_X((a-1) d))
$
is a part of this collection.
On the other hand,
one has $\pi (B(i)) = \scO_X(d i)$ for any $i \in \bZ$
since $B$ is the $d$-th Veronese subring.
Now one can move these objects to the left by mutation,
and Theorem \ref{th:main1} follows from Theorem \ref{th:Orlov}
just as in the $d = 1$ case.

When $d = n+1$,
then $B$ is Gorenstein with parameter $1$,
and one does not need any mutation,
so that $\Dbsing(\gr B)$ has a full strong exceptional collection.

One can generalize the story
to the case with arbitrary weights
$\deg x_i = a_i$
and a finite subgroup $G \subset \SL_{n+1}(k)$
with a free action on $\bA^{n+1} \setminus \bszero$
commuting with the $\bG_m$-action.
The category $\qgr A$ is equivalent
to the category of coherent sheaves
on the weighted projective space
$\bP(a_1, \ldots, a_{n+1})$,
the Beilinson collection is given by
$
 (\scO, \scO(1), \ldots, \scO(a_1 + \cdots + a_{n+1})),
$
and the Gorenstein parameter
of the polynomial ring is
$
 a = a_1 + \cdots + a_{n+1}.
$
The case $d = a_1 + \cdots + a_{n+1}$ and $G = 1$
is discussed in \cite{Ueda_TCGCQS}.

\section{Crossed product algebras} \label{sc:crossed_product}

Let $A$ be an $\bN$-graded connected Gorenstein ring
with parameter $a > 0$
and $G$ be a finite group acting on $A$.
We assume that the characteristic of the base field $k$
does not divide the order of $G$.
The set of irreducible representations of $G$
will be denoted by
$
 \Irrep(G) = \{ \rho_0, \ldots, \rho_r \}
$
where $\rho_0$ is the trivial representation.

\begin{proof}[Proof of Theorem \ref{th:main3}]

We need to show the existence
of a full and faithful functor
$
 \Phi : \Dbsing(\gr A \rtimes G) \to D^b (\qgr A \rtimes G)
$
and a semiorthogonal decomposition
\begin{align*}
 D^b(\qgr A \rtimes G) = \langle \scO \otimes \rho_0, &\ldots,
  \scO \otimes \rho_r, \ldots, \\
  &\scO(a-1) \otimes \rho_0, \ldots, \scO(a-1) \otimes \rho_r,
  \Phi \Dbsing(\gr A \rtimes G) \rangle.
\end{align*}
%
%
Since $A$ is Gorenstein,
$A$ has finite injective dimension
as left and right module over itself.
It follows that $A \rtimes G$ also has
finite injective dimension
as left and right module over itself, and
one has mutually inverse equivalences
\begin{align*}
 D^{\phantom \circ} =
  \bR \Hom_{\phantom{(} A \rtimes G^{\phantom \circ}
   \phantom{)}}(\bullet, A \rtimes G)
  &: D^b(\gr^G A)^\circ \to D^b(\gr^G A^\circ), \\
 D^\circ = \bR \Hom_{(A \rtimes G)^\circ}(\bullet, A \rtimes G)
 & : D^b(\gr^G A^\circ)^\circ \to D^b(\gr^G A).
\end{align*}
of triangulated categories,
where $\bullet^\circ$ denotes the opposite rings and categories.

For an integer $i$,
let $\scS_{< i}$ be the full subcategory of $D^b(\grG A)$
consisting of complexes of torsion modules
concentrated in degrees less than $i$.
In other words, it is the full triangulated subcategory of $D^b(\grG A)$
generated by $k(e) \otimes \rho$
for $e < -i$ and $\rho \in \Irrep(G)$,
where $k(e) \otimes \rho$ is the $e$-shift
of the $A \rtimes G$-module
which is isomorphic to $\rho$ as a $G$-module
and annihilated by $A_+ = \bigoplus_{i=1}^\infty A_i$.
One can show
just as in \cite[Lemma 2.3]{Orlov_DCCSTCS}
that $\scS_{< i}$ is left admissible in $D^b(\grG A)$
and the left orthogonal is the derived category
$D^b(\grG A_{\ge i})$ of graded $G \rtimes G$ modules $M$
such that $M_p = 0$ for any $p < i$;
\begin{align} \label{eq:sd1}
 D^b(\grG A) = \la \scS_{< i}, D^b(\grG A_{\ge i}) \ra.
\end{align}
Let further $\scP_{< i}$ be the full subcategory of $D^b(\grG A)$
generated by projective modules $A(m) \otimes \rho$
for $m > - i$ and $\rho \in \Irrep(G)$.
One can also show
\begin{align} \label{eq:sd2}
 D^b(\grG A) = \la D^b(\grG A_{\ge i}), \scP_{<i} \ra
\end{align}
just as in \cite[Lemma 2.3]{Orlov_DCCSTCS}.
The proof of \cite[Lemma 2.4]{Orlov_DCCSTCS}
carries over verbatim to the $G$-equivariant case, and
gives weak semiorthogonal decompositions
\begin{align}
 D^b(\grG A_{\ge i}) &= \la \scD_i, \, \scS_{\ge i} \ra,
  \label{eq:sd3} \\
 D^b(\grG A_{\ge i}) &= \la \scP_{\ge i}, \, \scT_i \ra
  \label{eq:sd4}
\end{align}
where $\scD_i$ and $\scT_i$ are equivalent to
$D^b(\qgrG A)$ and $\Dbsing(\grG A)$ respectively.
\eqref{eq:sd1} and \eqref{eq:sd3} shows that
$\scS_{\ge i}$ is right admissible in $D^b(\grG A)$.
The functor $D$ takes the subcategory $\scS_{\ge i}(A)$
to the subcategory $\scS_{<-i-a+1}(A^\circ)$,
so that the right orthogonal $\scS_{\ge i}^\bot(A)$
is sent to the left orthogonal
$\!^\bot \scS_{<-i-a+1}(A^\circ)$.
The latter subcategory coincides with the right orthogonal
$\scP_{<-i-a+1}^\bot(A^\circ)$
by \eqref{eq:sd1} and \eqref{eq:sd2}.
The functor $D^\circ$ takes
the right orthogonal $\scP_{<-i-a+1}^\bot(A^\circ)$
to the left orthogonal $\!^\bot \scP_{\ge i+a}(A)$,
so that one has an equality
\begin{align} \label{eq:sd5}
 \scS_{\ge i}^\bot = \!^\bot \scP_{\ge i+a}
\end{align}
of subcategories of $D^b(\grG A)$.
%
One has
a weak semiorthogonal decomposition
\begin{align*}
 D^b(\grG A) = \la \scS_{<i}, \scD_i, \scS_{\ge i} \ra
\end{align*}
by \eqref{eq:sd1} and \eqref{eq:sd3},
which gives
\begin{align*}
 D^b(\grG A) = \la \scP_{\ge i+a}, \scS_{\ge i}, \scD_i \ra
\end{align*}
by \eqref{eq:sd5}.
Since Gorenstein parameter $a$ is positive,
the subcategory $\scP_{\ge i+a}$ is not only
right orthogonal but also left orthogonal to $\scS_{< i}$,
and one obtains a weak semiorthogonal decomposition
\begin{align} \label{eq:sd6}
 D^b(\grG A) = \la \scS_{\ge i}, \scP_{\ge i+a}, \scD_i \ra.
\end{align}
On the other hand,
\eqref{eq:sd1} and \eqref{eq:sd4} gives
a weak semiorthogonal decomposition
\begin{align} \label{eq:sd7}
 D^b(\grG A) = \la \scS_{\ge i}, \scP_{\ge i}, \scT_i \ra,
\end{align}
By combining 
\eqref{eq:sd6}, \eqref{eq:sd7} and
\begin{align*}
 \scP_{\ge i}
  = \langle \scP_{\ge i+a},
     A(-i-a+1) &\otimes \rho_0, \ldots, A(-i-a+1) \otimes \rho_r, \\
      & \ldots,
     A(-i) \otimes \rho_0, \ldots, A(-i) \otimes \rho_r
    \rangle,
\end{align*}
one obtains
\begin{align*}
 \scD_i
  = \langle
     A(-i-a+1) \otimes \rho_0, \ldots, \, &A(-i-a+1) \otimes \rho_r, \\
       &\ldots,
     A(-i) \otimes \rho_0, \ldots, A(-i) \otimes \rho_r,
     \scT_i
    \rangle,
\end{align*}
and Theorem \ref{th:main3} follows
by setting $i = -a+1$.
\end{proof}


Let $A = \bigoplus_{i \in \bZ} R_{i d}$ be
the $d$-th Veronese ring of
$R = k[x_1, \ldots, x_{n+1}]$
for a divisor $d$ of $n+1$, and
$G$ be a finite subgroup of $\GL_{n+1}(k)$
whose order is not divisible by the characteristic of $k$.
Theorem \ref{th:main2} is an immediate consequence of
Theorem \ref{th:main3}:

\begin{proof}[Proof of Theorem \ref{th:main2}]
The graded ring $A$ is Gorenstein with parameter $a = (n+1)/d$,
and one has an equivalence
$$
 \qgrG A \cong \coh^G \bP^n
$$
of abelian categories.
The derived category $D^b \coh^G \bP^n$
has a full strong exceptional collection
\begin{align*}
 ( \scO_{\bP^n} \otimes \rho_0, \ldots, \scO_{\bP^n} \otimes \rho_r,
  \scO_{\bP^n}(1) \otimes \rho_0, \ldots,
  \scO_{\bP^n}(1) \otimes \rho_r, \ldots,
  \scO_{\bP^n}(n) \otimes \rho_0, \ldots, \scO_{\bP^n}(n) \otimes \rho_r
  ).
\end{align*}
Theorem \ref{th:main3} shows that
the full subcategory of $D^b \coh^G \bP^n$
generated by
\begin{align*}
 ( \scO_{\bP^n}(a) \otimes \rho_0, \ldots, \scO_{\bP^n}(a) \otimes \rho_r,
  \scO_{\bP^n}(a+1) \otimes \rho_0, &\ldots,
  \scO_{\bP^n}(a+1) \otimes \rho_r, \ldots, \\
  &\scO_{\bP^n}(n) \otimes \rho_0, \ldots, \scO_{\bP^n}(n) \otimes \rho_r
  )
\end{align*}
is equivalent to $\Dbsing(\grG A)$, and
Theorem \ref{th:main2} is proved.
\end{proof}

\section{Examples} \label{sc:example}

We discuss a few examples in this section.
Let us first consider the case
when $G \subset \SL_2(\bC)$ is the binary dihedral group
of type $D_4$.
The invariant subring $A = \bC[x_1, x_2]^G$
is generated by three elements $u$, $v$ and $w$
of degrees 4, 8 and 10
satisfying
$
 u^5 + u v^2 + w^2 = 0.
$
One has $\Irrep(G) = \{ \rho_0, \rho_1, \rho_2, \rho_3, \rho_4 \}$
and the quiver describing the total morphism algebra
of the full strong exceptional collection
$(\scO \otimes \rho_0, \ldots, \scO(1) \otimes \rho_4)$
is given as follows:
$$
\begin{psmatrix}
 \scO \otimes \rho_0 & \scO \otimes \rho_1 & \scO \otimes \rho_2 &
 \scO \otimes \rho_3 & \scO \otimes \rho_4 \\
 \scO(1) \otimes \rho_0 & \scO(1) \otimes \rho_1 & \scO(1) \otimes \rho_2 &
 \scO(1) \otimes \rho_3 & \scO(1) \otimes \rho_4
\end{psmatrix} 
\psset{nodesep=3pt,arrows=->}
\ncline{1,1}{2,3}
\ncline{1,2}{2,3}
\ncline{1,4}{2,3}
\ncline{1,5}{2,3}
\ncline{1,3}{2,1}
\ncline{1,3}{2,2}
\ncline{1,3}{2,4}
\ncline{1,3}{2,5}
$$
Since the Gorenstein parameter of $A$ is two,
we have to remove $\scO \otimes \rho_0$ and
$\scO(1) \otimes \rho_0$ from the left.
The object $\scO \otimes \rho_0$ can be removed
without any mutation,
and when we remove $\scO(1) \otimes \rho_0$,
only $\scO \otimes \rho_2$ will be affected,
which will be turned into
$$
 R_{\scO \otimes \rho_2} \scO(1) \otimes \rho_0
  = \{ \scO \otimes \rho_2 \to \scO(1) \otimes \rho_0 \}.
$$
The resulting quiver is given as follows:
$$
\begin{psmatrix}
 \scO \otimes \rho_1 &
 R_{\scO \otimes \rho_2} \scO(1) \otimes \rho_0 &
 \scO \otimes \rho_3 & \scO \otimes \rho_4 \\
 \scO(1) \otimes \rho_1 & \scO(1) \otimes \rho_2 &
 \scO(1) \otimes \rho_3 & \scO(1) \otimes \rho_4
\end{psmatrix} 
\psset{nodesep=3pt,arrows=->}
\ncline{1,1}{2,2}
\ncline{1,3}{2,2}
\ncline{1,4}{2,2}
\ncline{1,2}{2,1}
\ncline{1,2}{2,3}
\ncline{1,2}{2,4}
$$
The resulting full exceptional collection is strong in this case,
and the corresponding quiver is a disjoint union
of two Dynkin quivers of type $D_4$.

Now let us take a Veronese subring of $A$.
Since the Gorenstein parameter of $A$ is two,
only the second Veronese subring
$
 B = \bigoplus_{i \in \bZ} A_{2 i}
$
is Gorenstein,
which has Gorenstein parameter one.
Since $A$ has no odd components,
$B$ is isomorphic to $A$ as an algebra,
and only the grading is changed.
The stack $\Proj B = [ (\Spec B \setminus \bszero) / \bG_m]$
is a weighted projective line $\bX_{2,2,2}$
in the sense of Geigle and Lenzing
\cite{Geigle-Lenzing_WPC}
with three orbifold points of order $2$,
which is obtained from $\Proj A$
by the inverse root construction
(i.e. by removing the generic stabilizer).
It follows that $D^b \qgr A$ is equivalent
to the direct sum of two copies of $D^b \qgr B$,
and $D^b \qgr B$ is equivalent to the full subcategory
of $D^b \qgr A$ generated
by half of the full strong exceptional collection
in $D^b \qgr A$
shown below:
$$
\begin{psmatrix}
 \scO \otimes \rho_0 & \scO \otimes \rho_1 & &
 \scO \otimes \rho_3 & \scO \otimes \rho_4 \\
 & & \scO(1) \otimes \rho_2 & &
\end{psmatrix} 
\psset{nodesep=3pt,arrows=->}
\ncline{1,1}{2,3}
\ncline{1,2}{2,3}
\ncline{1,4}{2,3}
\ncline{1,5}{2,3}
$$
Since the Gorenstein parameter of $B$ is one,
$\Dbsing(\gr B)$ is equivalent to the full subcategory
of $D^b(\qgr B)$
generated by the exceptional collection
obtained from the above collection
by removing $\scO \otimes \rho$,
which gives a Dynkin quiver of type $D_4$:
$$
\begin{psmatrix}
 \scO \otimes \rho_1 & \scO \otimes \rho_3 & \scO \otimes \rho_4 \\
 & \scO(1) \otimes \rho_2 &
\end{psmatrix} 
\psset{nodesep=3pt,arrows=->}
\ncline{1,1}{2,2}
\ncline{1,2}{2,2}
\ncline{1,3}{2,2}
$$
On the other hand,
the crossed product algebra
$R \rtimes G$ with $R = \bC[x_1, x_2]$ is regular,
so that $\Dbsing(\gr^G R)$ is zero.
The graded stable derived category
$\Dbsing(\gr R^{(2)})$
of the second Veronese subring $R^{(2)} \rtimes G$
is equivalent to the full subcategory
of $D^b(\qgr^G R^{(2)}) \cong D^b \coh^G \bP^1$
generated by the strong exceptional collection
$$
 (\scO(1) \otimes \rho_0, \scO(1) \otimes \rho_1, \ldots,
   \scO(1) \otimes \rho_4)
$$
by Theorem \ref{th:main2},
which is just the direct sum of five copies
of the derived category of finite-dimensional vector spaces.

Next we consider the case when
$
 G = \la \exp(2 \pi \sqrt{-1}/3) \cdot \id_{\bA^3} \ra
$
is a cyclic subgroup of $\SL_3(k)$
of order three.
The total morphism algebra
of the full strong exceptional collection
$(\scO \otimes \rho_0, \ldots, \scO(2) \otimes \rho_2)$
in $D^b \coh^G \bP^2$
is given as follows:
$$
\begin{psmatrix}
 \scO \otimes \rho_0 & \scO(1) \otimes \rho_1 & \scO(2) \otimes \rho_2 \\
 \scO \otimes \rho_1 & \scO(1) \otimes \rho_2 & \scO(2) \otimes \rho_0 \\
 \scO \otimes \rho_2 & \scO(1) \otimes \rho_0 & \scO(2) \otimes \rho_1
\end{psmatrix} 
\psset{nodesep=3pt,arrows=->}
\ncline[offset=-3pt]{1,1}{1,2}
\ncline[offset=0pt]{1,1}{1,2}
\ncline[offset=3pt]{1,1}{1,2}
\ncline[offset=-3pt]{1,2}{1,3}
\ncline[offset=0pt]{1,2}{1,3}
\ncline[offset=3pt]{1,2}{1,3}
\ncline[offset=-3pt]{2,1}{2,2}
\ncline[offset=0pt]{2,1}{2,2}
\ncline[offset=3pt]{2,1}{2,2}
\ncline[offset=-3pt]{2,2}{2,3}
\ncline[offset=0pt]{2,2}{2,3}
\ncline[offset=3pt]{2,2}{2,3}
\ncline[offset=-3pt]{3,1}{3,2}
\ncline[offset=0pt]{3,1}{3,2}
\ncline[offset=3pt]{3,1}{3,2}
\ncline[offset=-3pt]{3,2}{3,3}
\ncline[offset=0pt]{3,2}{3,3}
\ncline[offset=3pt]{3,2}{3,3}
$$
Note that this is the disjoint union
of three copies of the Beilinson quiver for $\bP^2$.
The full exceptional collection in $\Dbsing(\gr A)$
is obtained from the above collection
by removing $\scO \otimes \rho_0$,
$\scO(1) \otimes \rho_0$ and
$\scO(2) \otimes \rho_0$.
To remove the second and the third object,
we can mutate the above collection as
$$
\begin{psmatrix}
 \scO \otimes \rho_0 & \scO(1) \otimes \rho_1 & \scO(2) \otimes \rho_2 \\
 \scO(2) \otimes \rho_0 & \scO(3) \otimes \rho_1 & \scO(4) \otimes \rho_2 \\
 \scO(1) \otimes \rho_0 & \scO(2) \otimes \rho_1 & \scO(3) \otimes \rho_2
\end{psmatrix} 
\psset{nodesep=3pt,arrows=->}
\ncline[offset=-3pt]{1,1}{1,2}
\ncline[offset=0pt]{1,1}{1,2}
\ncline[offset=3pt]{1,1}{1,2}
\ncline[offset=-3pt]{1,2}{1,3}
\ncline[offset=0pt]{1,2}{1,3}
\ncline[offset=3pt]{1,2}{1,3}
\ncline[offset=-3pt]{2,1}{2,2}
\ncline[offset=0pt]{2,1}{2,2}
\ncline[offset=3pt]{2,1}{2,2}
\ncline[offset=-3pt]{2,2}{2,3}
\ncline[offset=0pt]{2,2}{2,3}
\ncline[offset=3pt]{2,2}{2,3}
\ncline[offset=-3pt]{3,1}{3,2}
\ncline[offset=0pt]{3,1}{3,2}
\ncline[offset=3pt]{3,1}{3,2}
\ncline[offset=-3pt]{3,2}{3,3}
\ncline[offset=0pt]{3,2}{3,3}
\ncline[offset=3pt]{3,2}{3,3}
$$
so that the three objects
$\scO \otimes \rho_0$,
$\scO(1) \otimes \rho_0$ and
$\scO(2) \otimes \rho_0$ can safely be removed from the left
to obtain three copies of the generalized Kronecker quiver
$$
\begin{psmatrix}
 \bullet & \bullet
\end{psmatrix} 
\psset{nodesep=3pt,arrows=->}
\ncline[offset=-3pt]{1,1}{1,2}
\ncline[offset=0pt]{1,1}{1,2}
\ncline[offset=3pt]{1,1}{1,2}
$$
with three arrows.
On the other hand,
the third Veronese subring
$
 B = \bigoplus_{i \in \bZ} A_{3 i}
$
is Gorenstein with parameter one
and satisfies $\Proj B = \bP^3$,
so that $\Dbsing (\gr B)$ is equivalent
to the derived category of modules
over the generalized Kronecker quiver
with three arrows.
These results are in complete agreement
with the works of
Iyama and Yoshino \cite{Iyama-Yoshino_MTC},
Keller, Murfet and Van den Bergh
\cite{Keller-Murfet-Van_den_Bergh}, and
Iyama and Takahashi \cite{Iyama-Takahashi_TCTQS}.
The stable derived category of $R \rtimes G$
for the above $G$ and $R = \bC[x_1, x_2, x_3]$
is zero again,
and that of its third Veronese subring
$R^{(3)} \rtimes G$ is equivalent
to the full subcategory of
$D^b \qgr^G R^{(3)} \cong D^b \coh^G \bP^2$
generated by the strong exceptional collection
$$
 (\scO(1) \otimes \rho_0, \scO(1) \otimes \rho_1, \scO(1) \otimes \rho_2,
 \scO(2) \otimes \rho_0, \scO(2) \otimes \rho_1, \scO(2) \otimes \rho_2)
$$
which happens to be equivalent to $\Dbsing(\gr A)$ above.

Theorem \ref{th:main3} can be useful
also in other contexts.
An integer $n \times n$ matrix $(a_{ij})_{i,j=1}^n$ defines
a polynomial
$$
 W = \sum_{i=1}^n x_1^{a_{i1}} \cdots x_n^{a_{in}},
$$
which is called {\em invertible}
if the origin is an isolated singularity.
They play a pivotal role
in {\em transposition mirror symmetry}
of Berglund and H\"{u}bsch
\cite{Berglund-Hubsch},
which attracts much attention recently.
See e.g.
\cite{
Krawitz
}
and references therein
for more on invertible polynomials
and mirror symmetry.

Any invertible polynomial is weighted homogeneous,
and the choice of a weight is unique up to multiplication
by a constant.
The quotient ring $A = k[x_1, \ldots, x_n] / (W)$ is Gorenstein
with parameter
$$
 a = \deg x_1 + \cdots + \deg x_n - \deg W.
$$
If $a$ is positive,
then for any group $G$ of symmetries of $W$,
one has a semiorthogonal decomposition
in Theorem \ref{th:main3}.
One can also prove
an analogue of \cite[Theorem 2.5.(ii),(iii)]{Orlov_DCCSTCS}
for $a \le 0$
just as in Theorem \ref{th:main3}.
A typical example is the case
when $G$ is a subgroup of the group
$$
 G^{\mathrm{max}}
  = \{ (\alpha_1, \ldots, \alpha_n) \in (k^{\times})^n \mid
  \alpha_1^{a_{11}} \cdots \alpha_n^{a_{1n}} = \cdots
   = \alpha_1^{a_{n1}} \cdots \alpha_n^{a_{nn}} = 1 \}
$$
of {\em maximal diagonal symmetries} of $W$,
but one can also deal with other cases
such as the action of the symmetric group $\frakS_n$
on the Fermat polynomial $W = x_1^m + \cdots + x_n^m$.

\bibliographystyle{amsalpha}
\bibliography{bibs}

\noindent
Kazushi Ueda

Department of Mathematics,
Graduate School of Science,
Osaka University,\\
Machikaneyama 1-1,
Toyonaka,
Osaka,
560-0043,
Japan.

{\em e-mail address}\ : \  kazushi@math.sci.osaka-u.ac.jp
\ \vspace{0mm} \\

\end{document}